\newtheorem{theorem}{Theorem}[section]
\newtheorem{proposition}[theorem]{Proposition}
\newtheorem{remark}[theorem]{Remark}
\newcommand{\rod}{\text{RoD}}
\title{ An early warning system for multivariate time series with sparse and non-uniform sampling}
\author{Andrew Roberts}
\author{Sasanka Are}
\affil{Cerner Corporation}
\begin{document}
\maketitle

\begin{abstract} 
In this paper we propose a new early warning test statistic, the ratio of deviations (RoD), which is defined to be the root mean squared of successive differences divided by the standard deviation.  We show that RoD and autocorrelation are asymptotically related, and this relationship motivates the use of RoD to predict Hopf bifurcations in multivariate systems before they occur.  We validate the use of RoD on synthetic data in the novel situation where the data is sparse and non-uniformly sampled.  Additionally, we adapt the method to be used on high-frequency time series by sampling, and demonstrate the proficiency of RoD as a classifier.  

\end{abstract}

\section{Introduction}
The prospect of identifying a tipping point in an observed system prior to its occurrence has recently excited the scientific community.  Deterioration of a stable state may indicate the proximity of a tipping point, although neither necessarily implies the other \cite{boettiger2013}.  The recent emphasis on early detection of tipping points has its roots in applications that are sensitive to anthropogenic activity, such as climate and ecology \cite{dakos2012,dakos2008}.  Applications where that human intervention could prevent an undesirable regime shift makes the idea of early warning particularly enticing, and these concepts are spreading to other applications where intervention is critical, e.g. medicine \cite{chen2012,van2014}. 

Popular early warning signals include observing an increasing trend in a test statistic such as autocorrelation, variance, or skewness \cite{boettiger2013,kefi2013,lenton2011,van2014}, with the theory based in dynamical systems.  As a dynamical system  approaches a tipping point, the equilibrium state loses stability, and the destabilization is reflected in the eigenvalues of the Jacobian. The time it takes for small perturbations to return to the equilibrium state increases as the leading eigenvalue approaches the boundary between stable and unstable in the complex plane---either the imaginary axis for systems modeled in continuous time or the unit circle for map dynamics.  This phenomenon is known as \emph{critical slowing down}, referring to the decreased rate of return.  The requisite details on the theory of dynamical systems can be found in most introductory textbooks on the subject such as those by Strogatz \cite{strogatz2014} or Glendinning \cite{glendinning1994}.

Developing early warning signals to detect catastrophic changes, with the most common being a fold or saddle-node bifurcation, has been the primary focus of the field to date.  Catastrophic changes are necessarily \emph{irreversible}, and thus are accompanied by a 'point of no return.'  Other bifurcations (e.g., supercritical Hopf bifurcations) are reversible, and thus are not always considered tipping points.  However, in many applications it is still desirable to identify preventable or predictable changes in stability, even if they are reversible.  

In this paper we present a new test statistic for use as an early warning signal, called the \emph{ratio of deviations} (RoD), relating the root mean squared of successive deviations to the standard deviation of a time series.  We show that for a time series generated from a weakly stationary lag-1 autoregressive process, RoD is asymptotically related to lag-1 autocorrelation.  According to this relationship, as autocorrelation decreases the RoD increases.  Inspired by the asymptotic relationship, we examined what an increase in the RoD indicates about the future values of a time series.  We propose that an increase in RoD is indicative of a change in the nature of oscillations.  Thus, the RoD should be able to detect passage through a Hopf bifurcation.  

Supercritical Hopf bifurcations are reversible, and as such have not been a primary focus in the early warning literature.  Subcritical Hopf bifurcations may be accompanied by a catastrophic saddle-node bifurcation of periodic orbits, with a region of bistability where the two attractors are a stable equilibrium point and a stable periodic orbit.  However, even in the supercritical case, there is a hysteresis effect with a dynamic Hopf bifurcation in that a trajectory can remain near the newly unstable equilibrium for quite some time \cite{baer1989}.  Figure \ref{fig:del_hopf} depicts the phenomenon. Thus, even if a Hopf bifurcation is not detected until after it has occurred, there may still be enough time left to prevent any negative effects of a regime change.  Conversely, `reversing' a Hopf bifurcation after it is visually apparent (i.e., after a trajectory is near the periodic orbit), could require a significant shift in the parameter.  

\begin{figure}
	\centering
	\includegraphics[width=\textwidth]{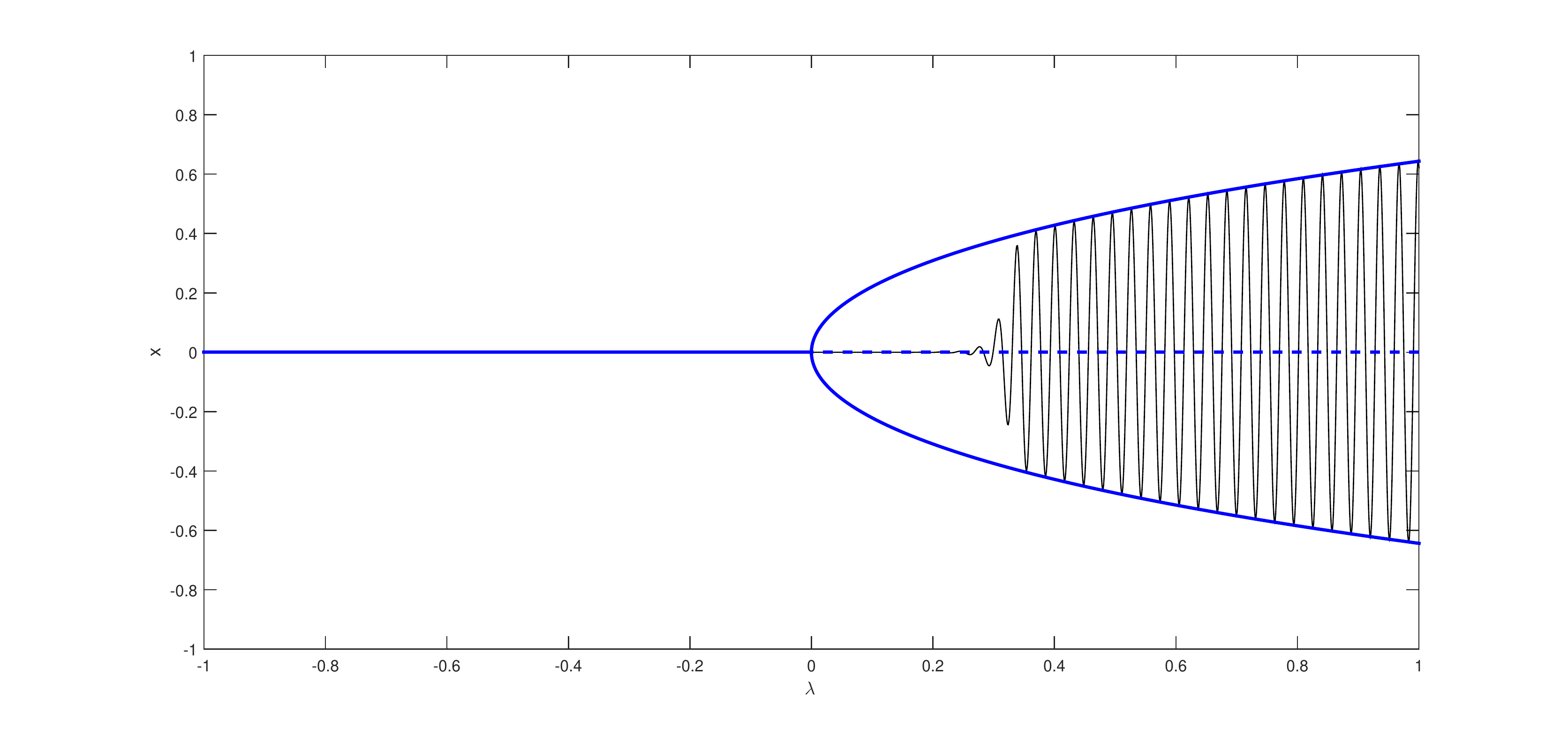}
	\caption{A dynamic Hopf bifurcation.  Blue lines and curves depict the bifurcation diagram of a system undergoing a Hopf bifurcation.  Solid blue lines/curves indicate stable sets for fixed $\lambda$, with the curve for $\lambda >0$ indicating the maximum and minimum $x$ values of an attracting periodic orbit.  The dashed blue line ($\lambda >0$) indicates the unstable equilibrium.  A trajectory (black) is depicted when $\lambda = \lambda(t)$.    }
	\label{fig:del_hopf}
\end{figure}

RoD is designed to be used on short time series with irregularly sampled observations, and we extend its use to longer, high-frequency time series through sampling.  To demonstrate the predictive skill of RoD, we simulated a system known to exhibit a supercritical Hopf bifurcation as a parameter passes through a critical value, plus control runs where the bifurcation parameter remains constant.  We ran two types of tests: (a) to evaluate use on short time series and (b) to evaluate the extension to longer time series.  RoD (paired with another test statistic) performed adequately as a predictor on the short time series.  Its performance on longer time series is exceptional.  


The paper will proceed as follows: Section \ref{sec:rod} describes the theory behind the ratio of deviations as an early warning signal.  Section \ref{sec:validation} discusses the performance on synthetic data.  
We conclude with a discussion in Section \ref{sec:discussion}.

\section{Ratio of Deviations}
\label{sec:rod}

Let ${X_{t_k}}$ be a univariate time series where $k = 1, \ldots, n, \ldots$.  The RoD relates the root mean squared of successive deviations (RMSSD), $\nu$, and standard deviation, $\sigma$.  By definition, we have
\begin{align}
	\nu^2(t_n) &= \frac{1}{n-1} \sum_{i=2}^n (X_{t_i} -X_{ t_{i - 1} } )^2, \text{ and} \\
	\sigma^2(t_n) &= \frac{1}{n} \sum_{i=1}^n ( X_{t_i} - \mu_n)^2,
\end{align}
where $\mu = \text{E}[X]$.  RoD is defined as
\begin{equation}
	\rod(t_n) = \frac{\nu(t_n)}{\sigma(t_n)}.
\end{equation}

\begin{proposition}
	\label{prop:rod}
If ${X_{t_i}}$ is a weakly stationary process, then 
\begin{equation}
	\lim_{n\rightarrow \infty} \rod^2(t_n) = 2 ( 1- \rho_X(1)), 
\end{equation}
where $\rho_X(1)$ is the lag-1 autocorrelation of $X_{t_i}$.  
\end{proposition}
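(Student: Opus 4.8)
The plan is to evaluate the limits of the numerator $\nu^2(t_n)$ and the denominator $\sigma^2(t_n)$ separately and then combine them by a Slutsky/continuous-mapping argument. Both quantities are Ces\`aro averages of stationary sequences, so the natural tool is a law of large numbers (equivalently, the mean ergodic theorem) for weakly stationary processes.

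First I would treat the numerator. The first-difference process $D_i := X_{t_i} - X_{t_{i-1}}$ is itself weakly stationary with mean zero, so $\nu^2(t_n) = \tfrac{1}{n-1}\sum_{i=2}^n D_i^2$ is the average of the stationary sequence $\{D_i^2\}$ and converges to $\mathrm{E}[D_2^2]$. Since $\mathrm{E}[D_2] = 0$,
\begin{equation}
  \mathrm{E}[D_2^2] = \mathrm{Var}(D_2) = \mathrm{Var}(X_{t_2}) + \mathrm{Var}(X_{t_1}) - 2\,\mathrm{Cov}(X_{t_2}, X_{t_1}) = 2\gamma_X(0) - 2\gamma_X(1) = 2\gamma_X(0)\bigl(1 - \rho_X(1)\bigr),
\end{equation}
where $\gamma_X(0) = \mathrm{Var}(X)$, $\gamma_X(1) = \mathrm{Cov}(X_{t_i}, X_{t_{i-1}})$, and $\rho_X(1) = \gamma_X(1)/\gamma_X(0)$; weak stationarity is exactly what makes these moments independent of $i$.

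Next I would handle the denominator, where the only wrinkle is the mean-centering. Writing $\bar X_n$ for the sample mean, $\sigma^2(t_n) = \tfrac{1}{n}\sum_{i=1}^n X_{t_i}^2 - \bar X_n^2$; the first term converges to $\mathrm{E}[X^2] = \gamma_X(0) + \mu^2$ by the same ergodic argument, while $\bar X_n \to \mu$, so $\sigma^2(t_n) \to \gamma_X(0)$. (If instead the fixed mean $\mu$ is used in place of $\mu_n$, then $\mathrm{E}[\sigma^2(t_n)] = \gamma_X(0)$ exactly and this step is trivial.) Assuming $\gamma_X(0) > 0$, the continuous mapping theorem then gives
\begin{equation}
  \rod^2(t_n) = \frac{\nu^2(t_n)}{\sigma^2(t_n)} \longrightarrow \frac{2\gamma_X(0)\bigl(1 - \rho_X(1)\bigr)}{\gamma_X(0)} = 2\bigl(1 - \rho_X(1)\bigr).
\end{equation}

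The main obstacle is rigor about the mode of convergence: weak stationarity by itself does not force time averages to converge to ensemble averages, so strictly speaking one needs an ergodicity hypothesis (or, concretely, that the Ces\`aro averages of the autocovariances vanish) together with a fourth-moment or summable-autocovariance condition to control $\mathrm{Var}(\nu^2(t_n))$ and $\mathrm{Var}(\sigma^2(t_n))$ and thereby obtain $L^2$ (hence in-probability) convergence. Alternatively one can read the statement at the level of expectations, where $\mathrm{E}[\nu^2(t_n)] = 2\gamma_X(0)(1-\rho_X(1))$ holds for every $n$ and $\mathrm{E}[\sigma^2(t_n)] \to \gamma_X(0)$ with an $O(1/n)$ discrepancy coming only from estimating the mean. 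Either way, the algebraic identity for $\mathrm{E}[D_2^2]$ is the easy part; pinning down the convergence of the Ces\`aro sums and accounting for the mean-centering bias in $\sigma^2$ is where the care is needed.
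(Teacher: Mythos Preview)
Your argument is correct and is essentially the same as the paper's: both proofs rest on expanding $(X_{t_i}-X_{t_{i-1}})^2$ and using weak stationarity so that the resulting averages converge to $2\sigma^2$ and $\mathrm{E}[X_{t_i}X_{t_{i-1}}]$, respectively. The only differences are organizational and in level of rigor: the paper assumes $\mathrm{E}[X]=0$ at the outset (sidestepping your mean-centering discussion) and manipulates the ratio directly rather than invoking Slutsky, and it takes the convergence of the Ces\`aro averages for granted, whereas you correctly flag that an ergodicity-type hypothesis is implicitly being used.
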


\begin{proof}
Both standard deviation and RMSSD are independent of the mean, so we assume that $\text{E}[X]=0$ zero without loss of generality.  Since $X_{t_i}$ is weakly stationary, we have that
\begin{equation*}
\rho_X(1) = \frac{\text{E}[X_{t_i}X_{t_{i-1}}]}{\sigma^2}.
\end{equation*}
Thus,
\begin{align*}
	\lim_{n\rightarrow \infty} \rod^2(t_n) 
	&= \lim_{n\rightarrow \infty}  
		\frac{ \displaystyle \frac{1}{n-1} \sum_{i=2}^n (X_{t_i} - X_{t_{i - 1}})^2}
			{\displaystyle \frac{1}{n} \sum_{i=1}^n X_{t_i}^2} \\
	&= \lim_{n\rightarrow \infty} 
		\frac{ \displaystyle \frac{1}{n-1} \sum_{i=2}^n X_{t_i}^2 +\frac{1}{n-1} \sum_{i=1}^{n-1}
			X_{t_i}^2 - 2 \frac{1}{n-1} \sum_{i=2}^n X_{t_{i-1}} X_{t_i} }
			{\displaystyle \frac{1}{n} \sum_{i=1}^n X_{t_i}^2} \\
	&= \lim_{n\rightarrow \infty} 
		\left( 
		\frac{ \displaystyle \frac{1}{n-1} \sum_{i=2}^n X_{t_i}^2 }{\displaystyle \frac{1}{n} \sum_{i=1}^n X_{t_i}^2}
		 + \frac{ \displaystyle\frac{1}{n-1} \sum_{i=1}^{n-1} X_{t_i}^2}{\displaystyle \frac{1}{n} \sum_{i=1}^n X_{t_i}^2}
		  - 2 \frac{ \displaystyle \frac{1}{n-1} \sum_{i=2}^n X_{t_{i-1}} X_{t_i} }{\displaystyle \frac{1}{n} \sum_{i=1}^n X_{t_i}^2} \right) \\
	&= 2 \frac{\sigma^2}{\sigma^2} - 2 \rho_X(1) \\
	&= 2(1-\rho_X(1) ).
\end{align*}
\end{proof}

\begin{remark}
We can compute an AR(1) coefficient and RoD for any finite time series, however the preceding result relies on $\sigma^2$ (eventually) being time-independent.
\end{remark}

\subsection{Detecting Changes in an Underlying System}
By examining a ratio of two distinct measures of dispersion, RoD is designed to detect a change in the nature of deviations in a time series.  We say RoD detects a change in the variable $X_t$ at time $t_k$ both $\rod(t_k) > \rod(t_{k-1})$.  Note that we only require a single increase in RoD rather than an increasing trend, in contrast with the standard practice for other early warning signals such as autocorrelation.  

It is possible that RoD increases even if both standard deviation and RMSSD decrease.  To remedy this, we recommend pairing RoD with a tandem value, similar to using autocorrelation in conjunction with standard deviation \cite{ditlevsen2010}.  We have explored requiring an increase in standard deviation or an increase in RMSSD.  Requiring an increase in standard deviation reduces the number of positives more than requiring RMSSD to increase does.  A third option is to set restrictions on the range of values a given variable can take, and require a value not meeting those restrictions.  In the third case, the reduction in positives will clearly be dependent upon the range.  The tandem metric that makes the most sense may depend on the application, a priori knowledge of the underlying system, or properties of the observations (especially on short time series).  

For a univariate time series $X_t$, we assume the underlying model is of the form
\begin{equation}
X_{t_k} = a_{t_k} X_{t_{k-1}} + \xi_{t_k}.
\end{equation}  
A change in $a_t$ will affect the stability of the system, possibly causing an increase in RoD depending on the magnitude and direction of the change.  Alternatively, RoD could detect a change due to a rare event where $\xi_{t_k}$ takes a value far from its mean.  

Because detection can be triggered due to a random event rather than deterioration of a stable state, RoD is best applied to multivariate systems.  Consider the multivariate linear system
\begin{equation}
\vec{X}(t_k) = A(t_k) \vec{X}(t_{k-1}) + \vec{\xi}(t_k),
\end{equation}
where $\vec{X}(t) = (X_1(t),\ldots,X_n(t) ) \in \mathbb{R}^n$, $\vec{\xi}(t) = (\xi_1(t),\ldots,\xi_n(t) ) \in \mathbb{R}^n$, and $A(t) = \left( a_{ij}(t) \right).$
By calculating the RoD for each variable $X_i(t)$ individually, we are able to detect changes in particular subsystems.  If we only detect a change in one variable, we conclude that it is likely due to the noise term.  However, if we detect a change in all variables (or all variables of a subsystem), it likely indicates a structural change in the system.  Furthermore, because we will use RoD on short time series with long times between observations, we would expect the effects of noise to subside before the next observation if the system were stable.

Note that we are detecting changes by observing an increase in the RoD of each univariate time series.  Proposition \ref{prop:rod} suggests that an increase in the RoD roughly corresponds with a decrease in autocorrelation.  If the underlying system loses stability due to decreasing autocorrelation, we should expect the trajectory to develop oscillations of increasing amplitude.  Thus we anticipate that RoD is a mechanism for detecting Hopf bifurcations.

\section{Validation on Synthetic Data}
\label{sec:validation}
We evaluated RoD as a test statistic on a suite of systems that are known to exhibit Hopf bifurcations as a parameter $\lambda$ passes through a critical value $\lambda_c$, including an additive white noise term in each equation.  All systems were simulated according to the Euler-Maruyama method using the `deSolve' package in R.

\subsection{A Simple Example: Hopf Normal Form}
\label{sec:simple}
First we demonstrate the method on a simple example: the Hopf normal form with additive white noise.  The equations of the Hopf normal form are   
\begin{equation}
	\label{normal2}
	\begin{array}{rl}
		dx &= [\lambda(t) x - y + 2 \eta x (x^2+y^2) - x (x^2+y^2)^2] dt + \eta dW_1 \\
		dy &= [x + \lambda(t) y + 2 \eta y (x^2+y^2) - y (x^2+y^2)^2] dt +  \eta dW_2,
	\end{array}
\end{equation}
and we take $\eta =  0.25.$  The system was simulated from $t=0$ to $t=100$ using time steps of 0.05.  We ramped $\lambda(t)$ linearly from -1 at $t=0$ to 1 at $t=100$, so the system undergoes a dynamic Hopf bifurcation at $(t,\lambda)=(50,0)$.  The time series $x(t)$ is plotted in Figure \ref{fig:example}.  We then sampled the time series for both $x$ and $y$ at times $t_i$, where $t_0 = 0$ and $t_{i+1} = t_i + \text{unif}(4,8)$ to generate sample time series $(x_i,y_i)$.  The $x_i$ are depicted by triangles in Figure \ref{fig:example}.  Let $X_n = \{ x_0, x_1, \ldots, x_n \}$ and define $Y_n$ similarly.  We computed the RoD for $X_n$ and $Y_n$ for each $n$.  We say we detect a change in the variable $x$ if both $\text{RoD}(X_{n+1}) > \text{RoD}(X_n)$ and $\sigma(X_{n+1})>\sigma(X_n)$ (similarly for $y$).  The red triangle in Figure \ref{fig:example} indicates the observation when a change was detected in both variables.   

\begin{figure}
	\centering
	\includegraphics[width=\textwidth]{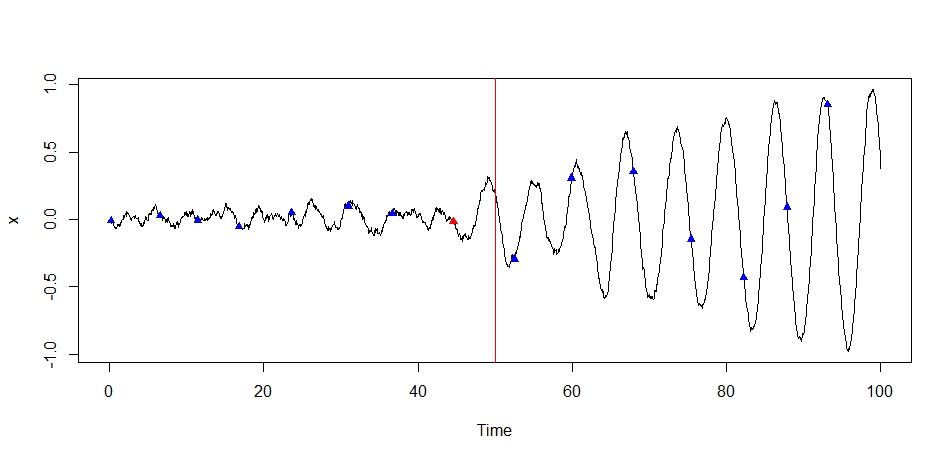}
	\caption{A simple example of RoD.  The black curve is $x(t)$ generated from \eqref{normal2} with $\sigma = 0.25$ and $\lambda(t) = t/50 - 1.$  The red vertical line at $t=50$ indicates the time when the system undergoes a dynamic Hopf bifurcation.  The triangles indicate a sample time series where the system is observed randomly between 4 and 8 time units after the previous observations.  The red triangle indicates the observation when RoD and standard deviation increase for both $x$ and $y$.  }
	\label{fig:example}
\end{figure}

\subsection{Experiments on a 3D Variation of the Van der Pol System} 
\label{sec:sys3}
A Hopf bifurcation can be observed in a 2-dimensional subsystem, at least locally (possibly after a change of coordinates).  However, in practice it is not necessarily clear which 2-dimensional subsystem one should consider nor what coordinate transformation should be used.  We tested the performance of RoD as a warning signal on two different parametrizations of 3D variation of the Van der Pol system.  

In certain parameter regimes, the Van der Pol system is an excitable system, meaning that a small perturbation (in a particular direction) can lead to a big oscillation \cite{glendinning1994,strogatz2014}.  Excitability is related to a separation of time scales and phenomenon known as \emph{canard explosion}.  For our purposes, it is sufficient to think of a canard explosion as Hopf bifurcation in which the amplitude of the periodic orbits grows exponentially in terms of distance in parameter space from the bifurcation.  A reader interested in delving deeper into the rich literature of canard-related phenomena can begin with works by Krupa and Szmolyan (e.g., \cite{krupa2001}) or those of Wechselberger (e.g., \cite{wechselberger2007}).  

The variant of the Van der Pol system we examine is
\begin{equation}
\label{vdp3}
\begin{array}{rl}
dx &= \frac{1}{a} (3x-x^3-y) dt + \sigma dW_1 \\
dy &= [x - \lambda(t)] dt +  \sigma dW_2 \\
dz &= (x-z)dt + \sigma dW_3,
\end{array}
\end{equation}
where $a$ is a time-scale parameter that affects the growth rate of the amplitude of periodic orbits.  We will refer to system \eqref{vdp3} with $a=10$ as `normal' because the amplitudes grow as expected according to a normal Hopf bifurcation.  We will refer to the system with $a=1$ as `excitable' or `with canard explosion' because the amplitudes grow much faster than normal.  Figure \ref{fig:can_exp} shows how the amplitudes of the periodic orbits grow after bifurcations in each case.  

\begin{figure}[t]
	\centering
	\begin{subfigure}[b]{0.45\textwidth}
		\includegraphics[width=\textwidth,height=2in]{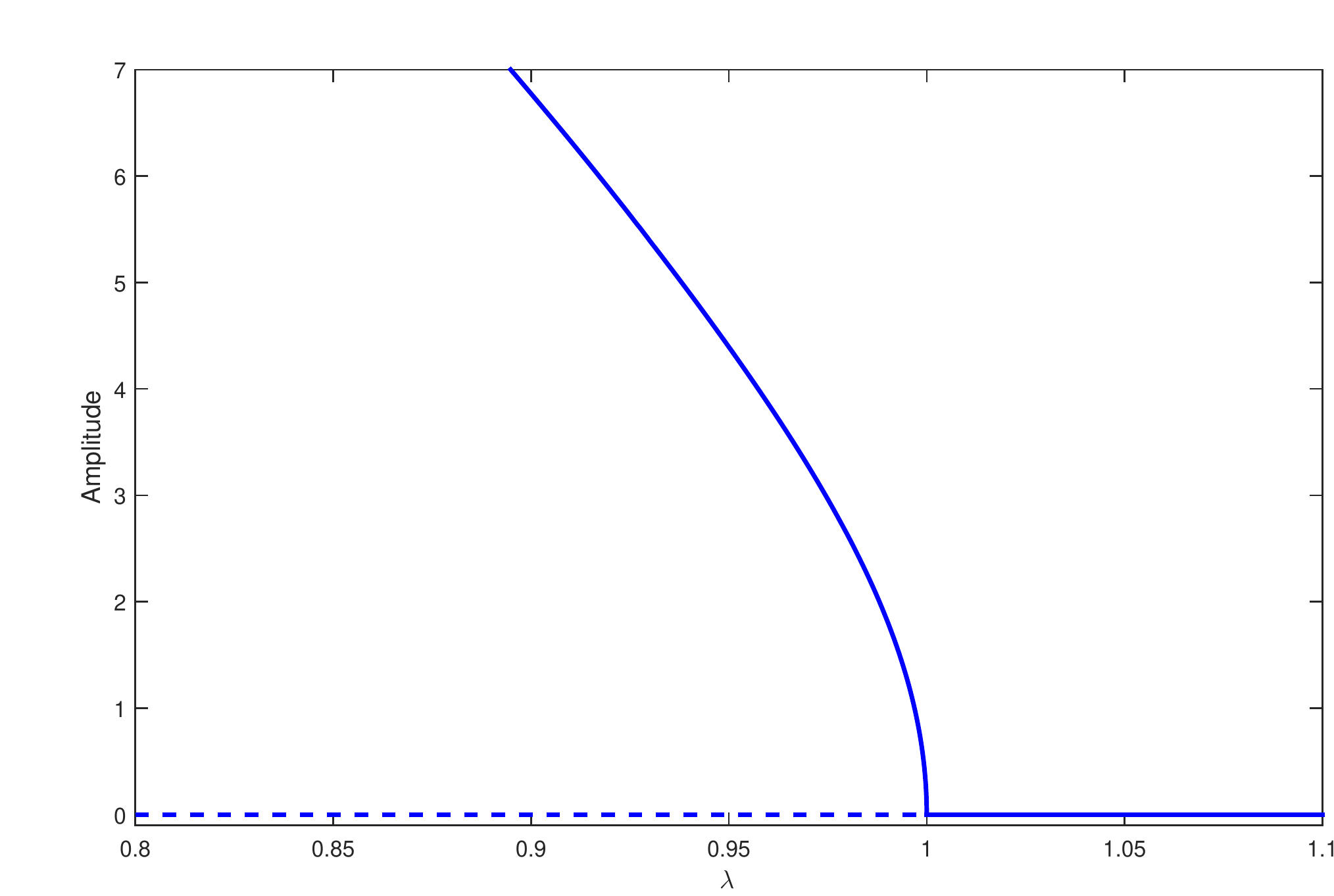}
		\caption{$a=10$ (Normal).}
		\label{fig:normal}
	\end{subfigure}
	~ 
	\begin{subfigure}[b]{0.45\textwidth}
		\includegraphics[width=\textwidth,height=2in]{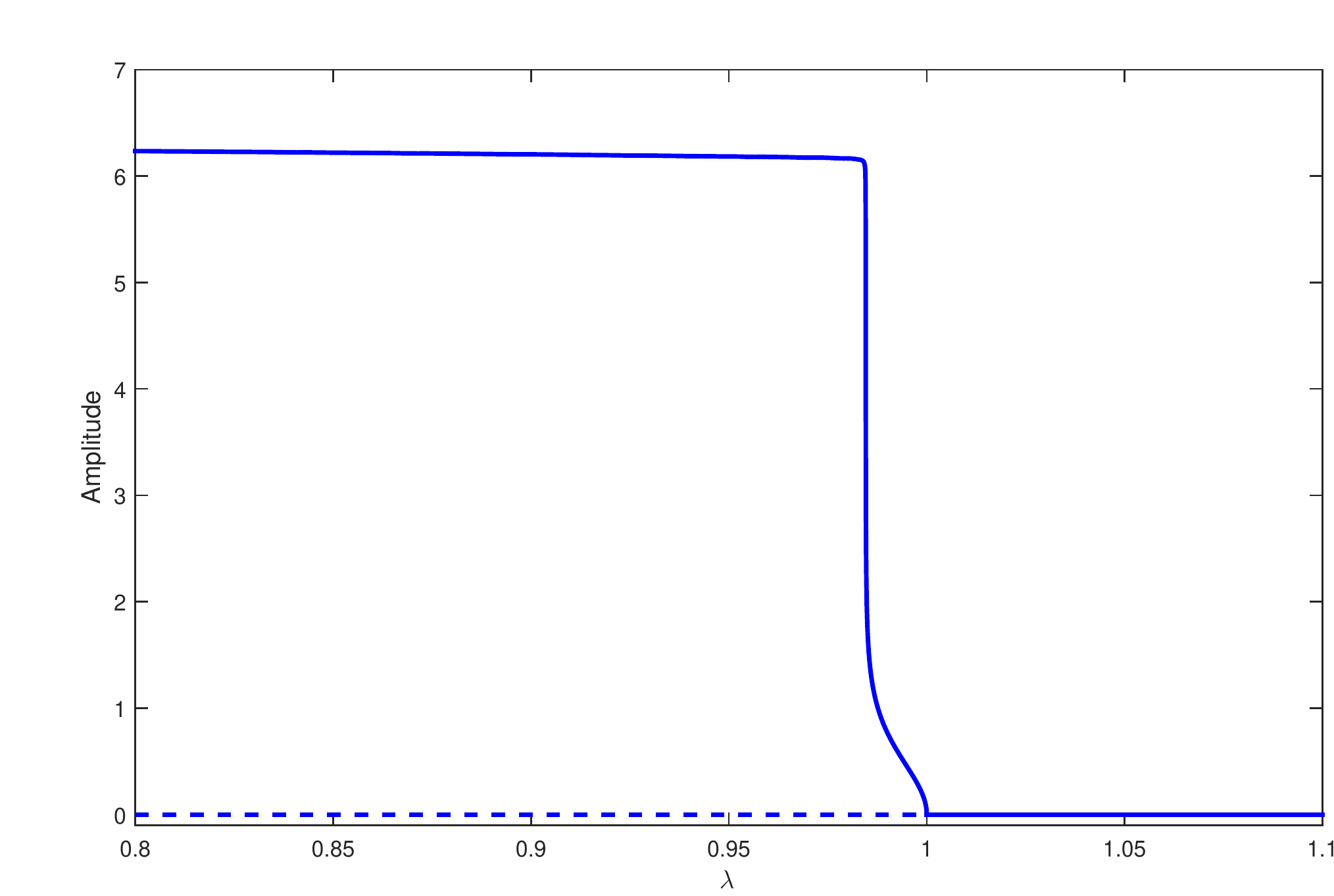}
		\caption{$a=1$ (Excitable).}
		\label{fig:excitable}
	\end{subfigure}
	\caption{Growth of periodic orbits after Hopf bifurcation at $\lambda = 1$ in \eqref{vdp3}.}   
	\label{fig:can_exp} 
\end{figure}

We ran 200 simulations of each parametrization for each of five prescribed noise intensities ($\sigma = 0,0.01,0.05,0.1,0.25$.) In half of the runs $\lambda$ was ramped to pass through a Hopf bifurcation, and the other half were control runs where $\lambda$ was held constant (i.e., $\lambda(t) \equiv \lambda_0 $).  The initial value $\lambda(0) = \lambda_0 = 1.2$ was chosen so that the system had an attracting equilibrium point, and we used the equilibrium point as the initial condition for the model.  Each simulation ran for 2000 dimensionless time units with steps of 0.05 units.  For the simulations when $\lambda$ was ramped, we chose $d\lambda/dt$ so that $\lambda(1000) = \lambda_c = 1$, i.e., $\lambda$ passed through the critical value exactly half-way through the simulation.

As stated earlier, we do not want to compute the RoD on the whole trajectory with high-frequency observations.  Instead, we observe the trajectories iteratively at random times, so that $t_{i+1} = t_i + \Delta T$ where $t_0 = 0$, $\Delta T = unif(\alpha, \beta),$ and $\beta > \alpha > 0.$  For each system we ran experiments varying $(\alpha,\beta)$, generating 100 time series from each trajectory for each of the values in Table \ref{tbl:obs}.  Additionally, we experimented with the window over which we used observations to calculate the RoD, using windows of 250, 500, 750, and 1000 time units.  
  
\begin{table}[b] 
	\begin{center}
		\addtolength{\tabcolsep}{1mm}
		\renewcommand{\arraystretch}{1.2}
		\begin{tabular}{|c|c|c|c|}
			\hline
			\textbf{Experiment}     & \textbf{$\alpha$}      	& \textbf{$\beta$} & \textbf{Mean Sampling Time = $(\alpha+\beta)/2$} 			\\
			\hline
			\hline  
			1 	 					& 20  						& 40               & 30  		\\
			\hline
			2 	 					& 25                 		& 50               & 37.5   		\\
			\hline
			3  						& 25  						& 75               & 50 		\\
			\hline
			4  						& 50  						& 100              & 75			\\
			\hline
		\end{tabular}
		\caption{Values used for experiments generating observations at random times.} 
		\label{tbl:obs}
	\end{center}
\end{table}

\begin{sidewaysfigure}
	\includegraphics[width=\columnwidth]{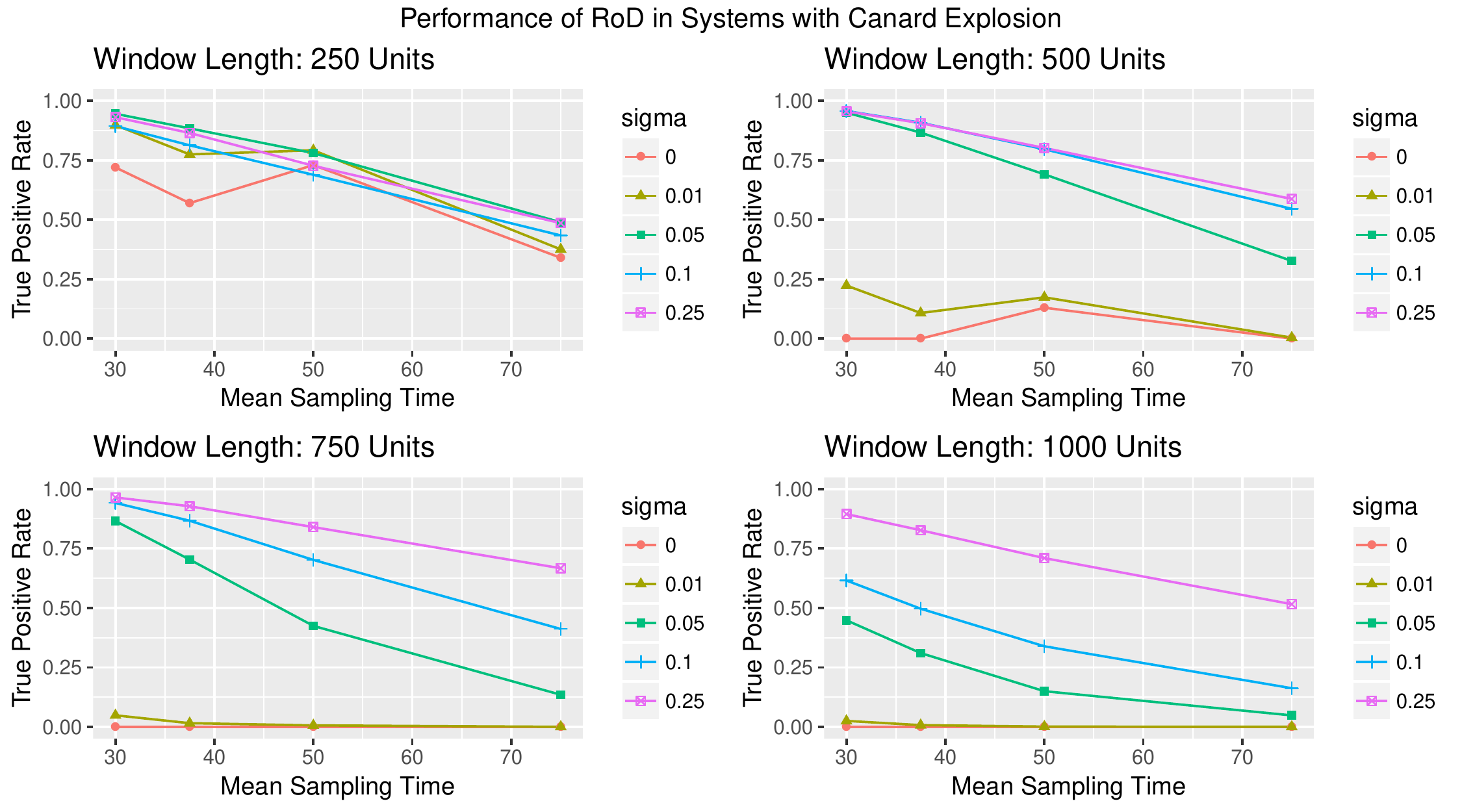}
	\caption{Rate of true positives on sampled time series from system \eqref{vdp3} with $a=1$.}
	\label{fig:exp_tp}
\end{sidewaysfigure}

\begin{sidewaysfigure}
	\includegraphics[width=\columnwidth]{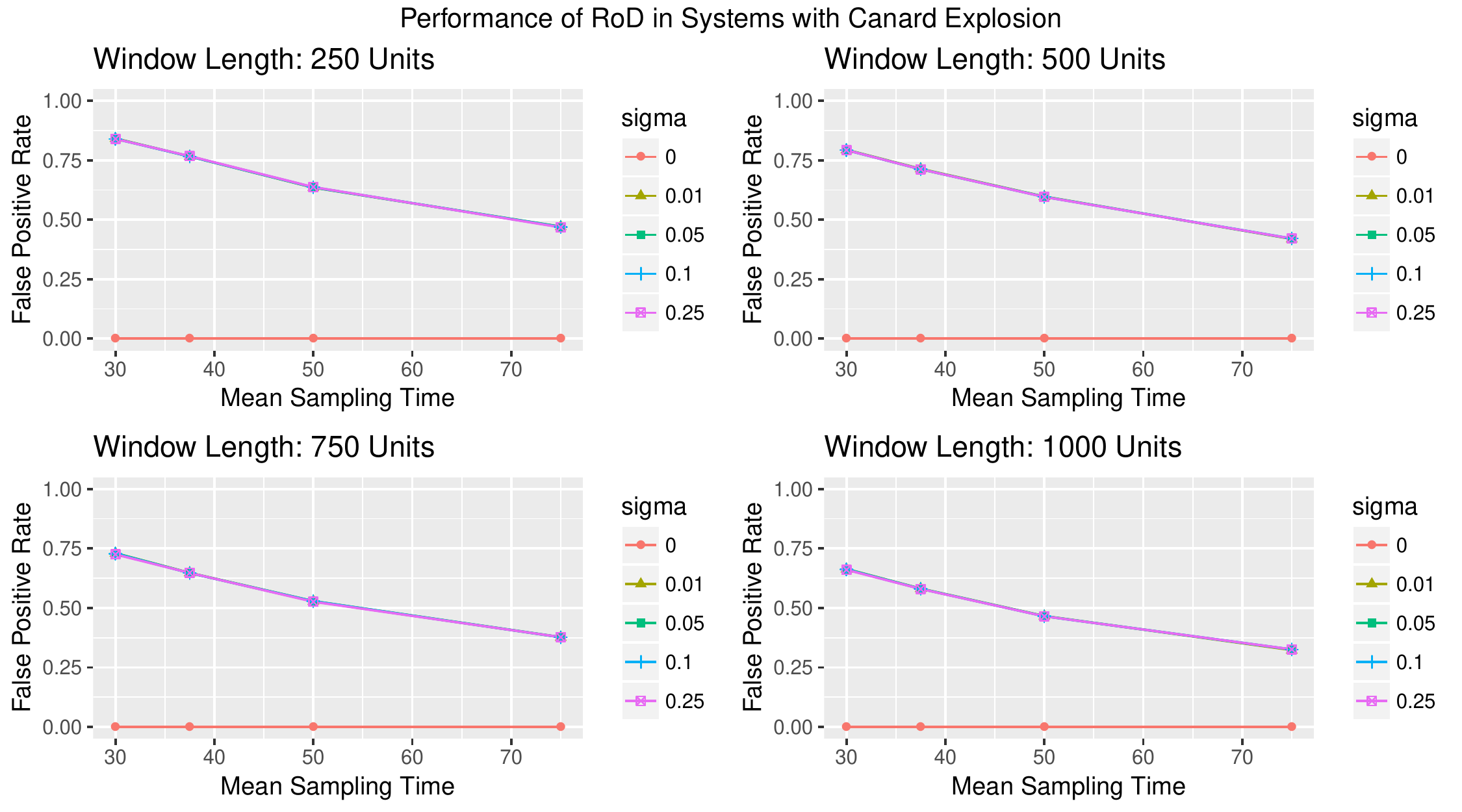}
	\caption{Rate of false positives on sampled time series from system \eqref{vdp3} with $a=1$.}
	\label{fig:exp_fp}
\end{sidewaysfigure}

\begin{sidewaysfigure}
	\includegraphics[width=\columnwidth]{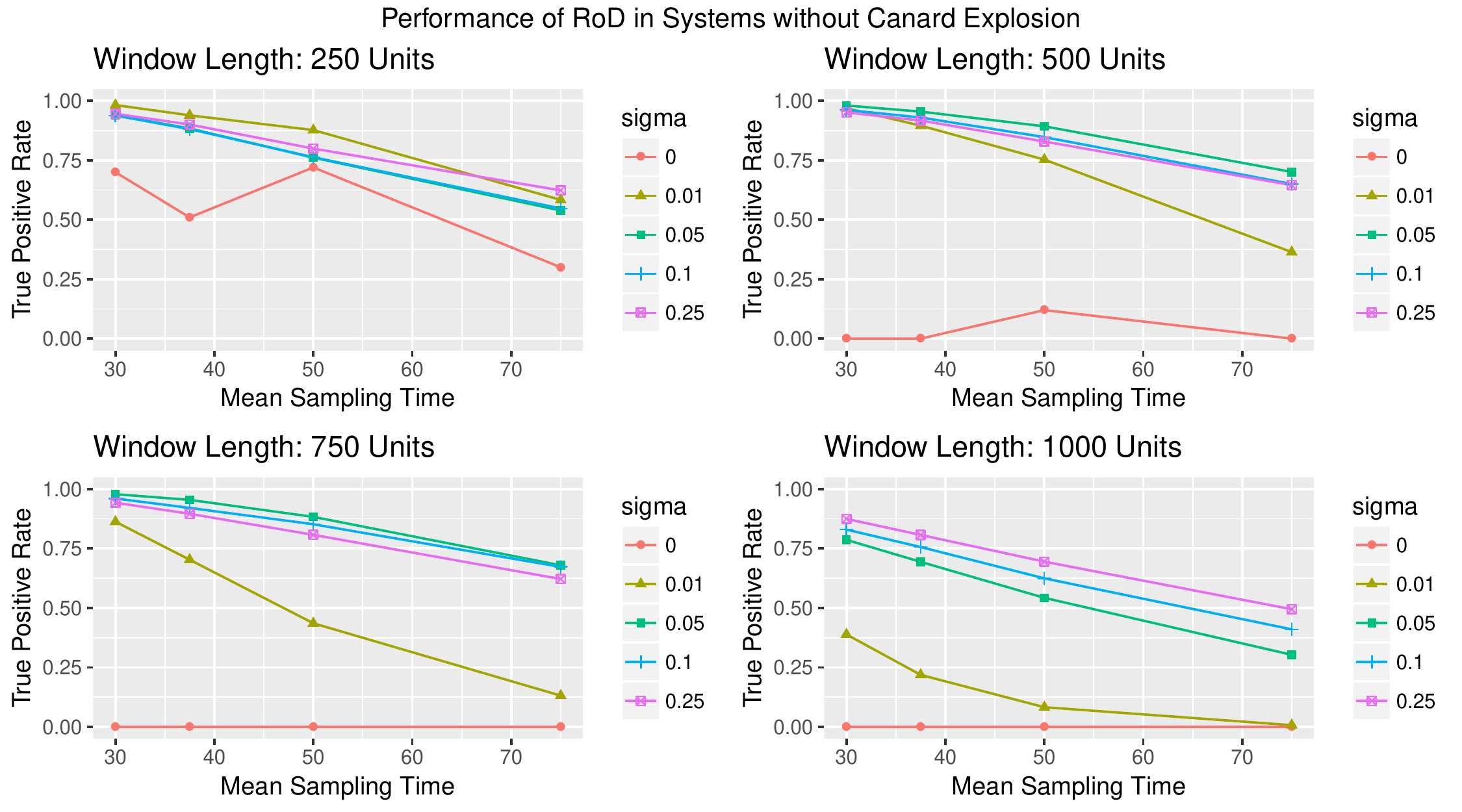}
	\caption{Rate of true positives on sampled time series from system \eqref{vdp3} with $a=10$.}
	\label{fig:std_tp}
\end{sidewaysfigure}

\begin{sidewaysfigure}
	\includegraphics[width=\columnwidth]{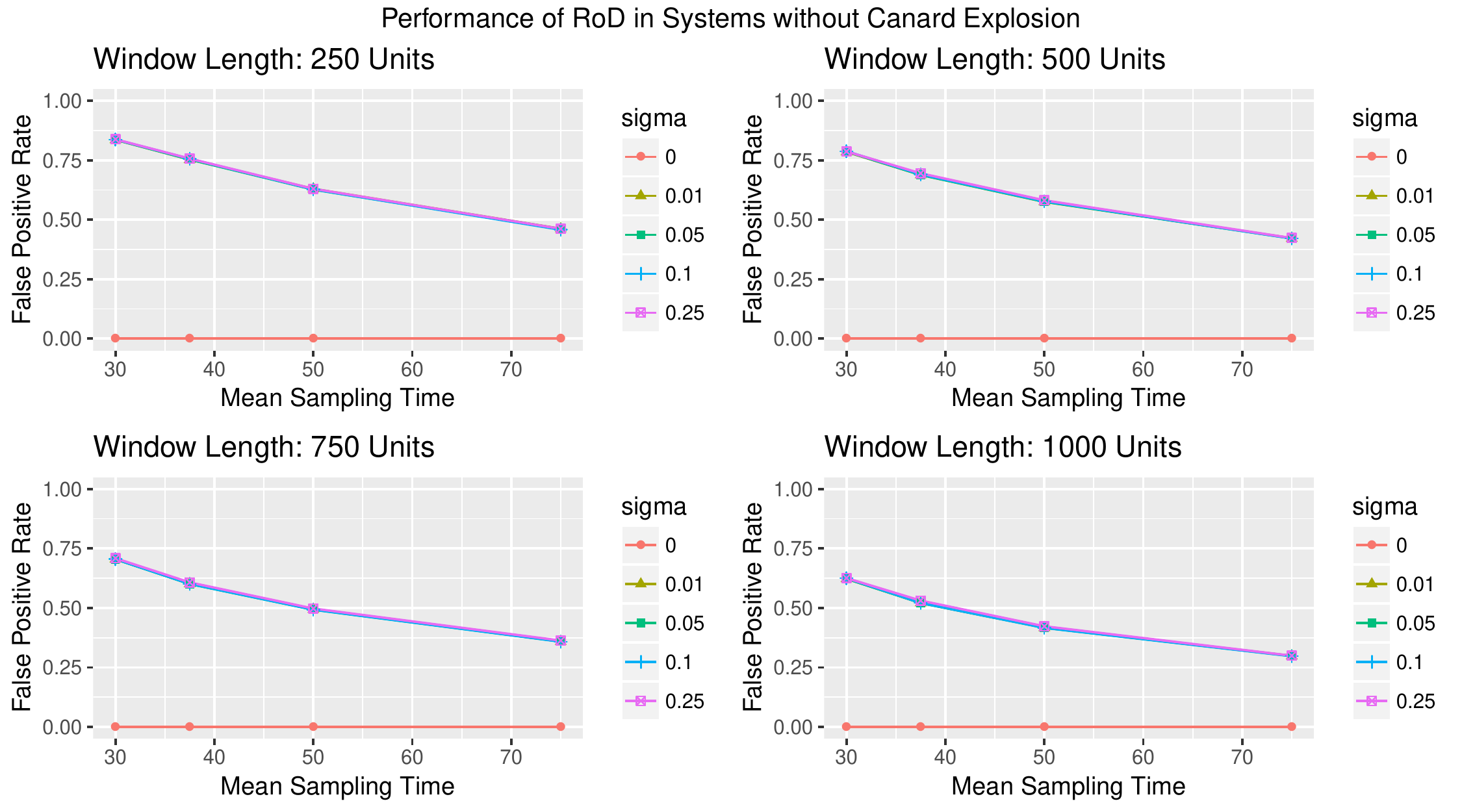}
	\caption{Rate of false positives on sampled time series from system \eqref{vdp3} with $a=10$.}
	\label{fig:std_fp}
\end{sidewaysfigure}

Since we want to detect bifurcations before they occur at $t=1000$, the window of 1000 time units represents an experiment using the entire trajectory.  Figures \ref{fig:exp_tp}-\ref{fig:std_fp} break down the results of these experiments by parametrization, window length, and observation rate when using RoD with RMSSD. 

\subsection{Experiments with High-frequency Data}
Next we turn our attention to experiments with high-frequency data.  We use the same trajectories as those simulated in the previous subsection.  Even though the goal is now to detect a bifurcation using high-frequency data, we still do not want to use all of the observations, or even all of the observations in a given window.  Instead, we sample the data and compute the RoD on the random samples.  

For this experiment, we use the same random samples of observations as in the previous subsection, we just utilize them differently.  Previously, we treated each sample of observations as if it were the only data we had to generate a binary prediction as to whether or not the system would undergo a Hopf bifurcation.  Now, we know that we are actually generating 100 predictions for each trajectory, and we use them to determine a probability that the system will undergo a bifurcation.  

Since we are now determining a `probability' for each trajectory, we can measure performance using area under the curve (AUC) where `the curve' refers to the receiver operating characteristic (ROC) curve.  The ROC curve depicts the performance of a binary classifier as the threshold used to separate positive predictions from negative is varied.  We are not required to use a probability of 0.5 to determine which trajectories we expect undergo bifurcation.  This observation means we no longer have to worry about false positives from individual samples, but rather on trajectories as a whole, allowing us to use RoD alone as a test statistic (e.g., without RMSSD or SD).  We found that RoD was best as a classifier when using a window length of 500 (half of the full trajectory before bifurcation) with $(\alpha,\beta) = (25,75)$, giving us an average of 10 observations in each RoD calculation.  The AUCs with these parameters were 0.937 for the `normal' system and 0.98 for the `excitable' system.  Figure \ref{fig:auc} depicts the ROC curves for the optimal case.  

\begin{figure}[t]
	\centering
	\begin{subfigure}[b]{0.45\textwidth}
		\includegraphics[width=\textwidth]{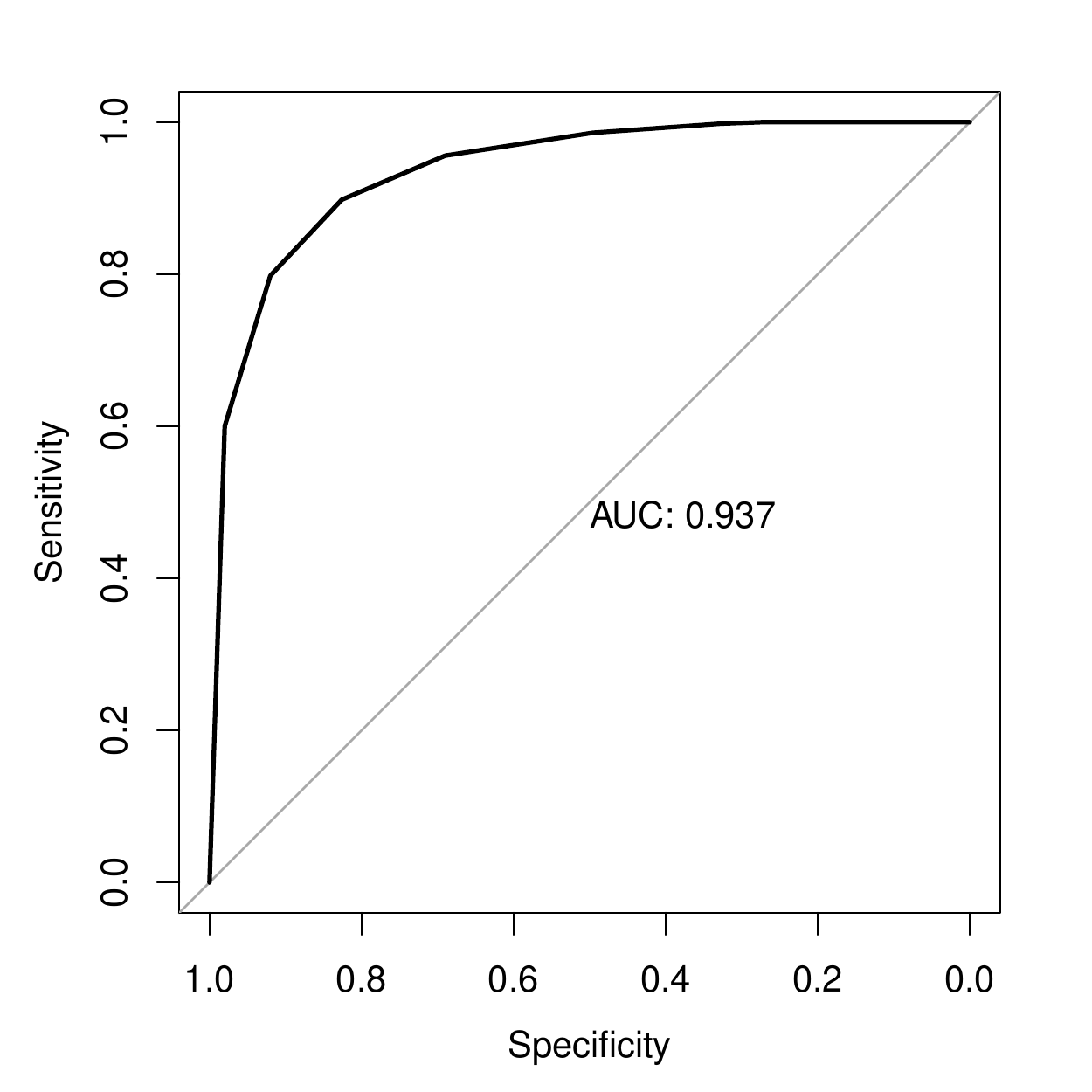}
		\caption{$a=10$ (Normal).}
		\label{fig:auc_std}
	\end{subfigure}
	~ 
	\begin{subfigure}[b]{0.45\textwidth}
		\includegraphics[width=\textwidth]{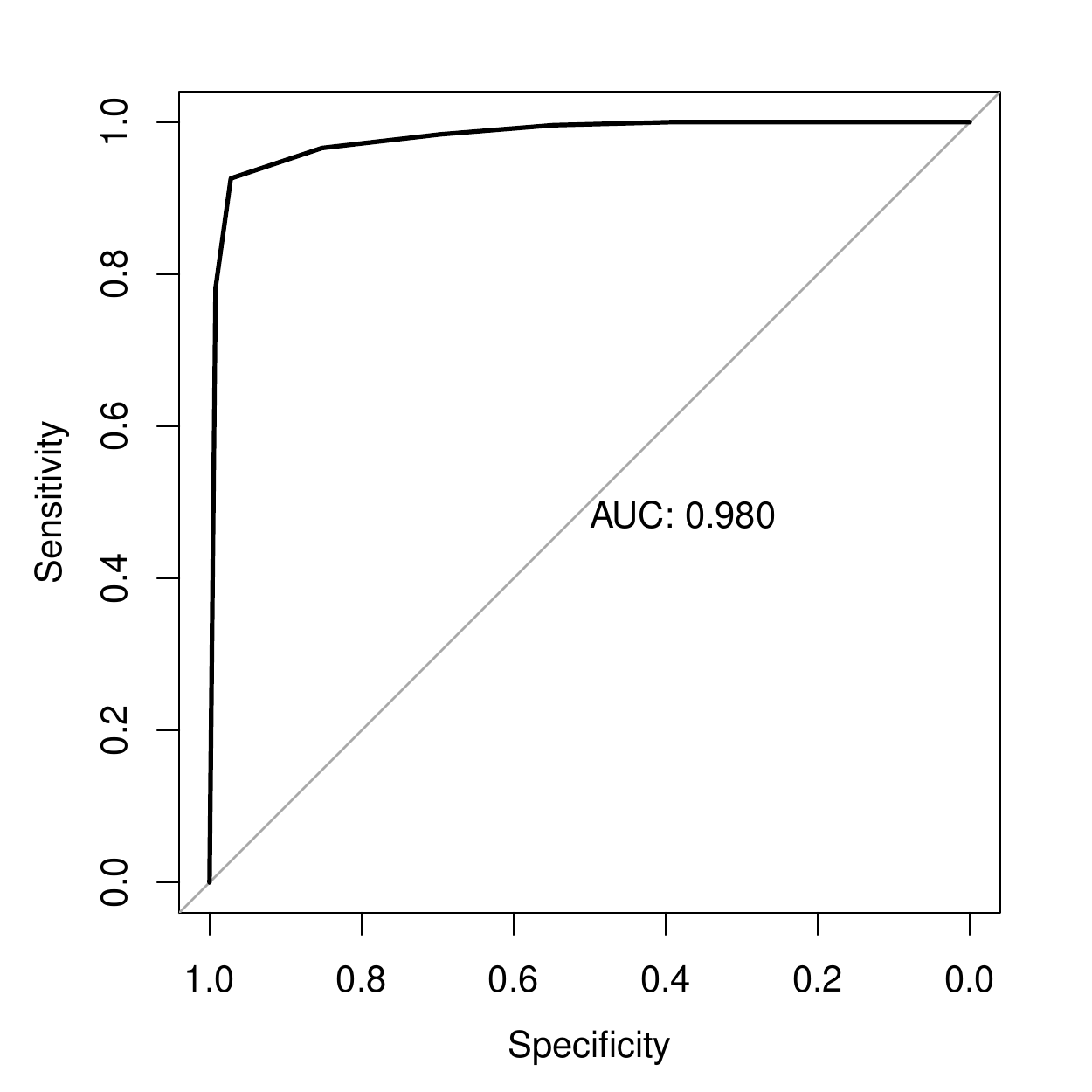}
		\caption{$a=1$ (Excitable).}
		\label{fig:auc_exp}
	\end{subfigure}
	\caption{ROC curves for two parametrizations of \eqref{vdp3} with $(\alpha,\beta) = (25,75)$ and window length = 500.}   
	\label{fig:auc} 
\end{figure}

In general, RoD performed well as a classifier in each of the experiments with between 8 and 14 observations expected to lie in each window.  We already discussed that a window length of 500 with an average of 10 observations per window gave the best results, but the second-best performance also came with an average of 10 observations per window (and window length of 750).  Table \ref{tbl:auc} lists the AUC for each of the experiments where the expected number of observations per window was in the interval $[8,14]$.  

\begin{table}[b] 
	\begin{center}
		\addtolength{\tabcolsep}{1mm}
		\renewcommand{\arraystretch}{1.2}
		\begin{tabular}{|c|c||c|c|c||c|c|}
			\hline
			\textbf{a}  & \textbf{AUC}	& \textbf{$\alpha$}	& \textbf{$\beta$}	& \textbf{Window}	& \textbf{a}	& \textbf{AUC}	\\
			\hline
			\hline  
			1 	 		& 0.867			& 25  				& 50				& 500				& 10 	 		& 0.830			\\
			\hline
			1 	 		& 0.980			& 25                & 75				& 500				& 10	 		& 0.937			\\
			\hline
			1  			& 0.876			& 50  				& 100				& 750				& 10			& 0.911			\\
			\hline
			1  			& 0.840			& 50  				& 100				& 1000				& 10 			& 0.800			\\
			\hline
		\end{tabular}
		\caption{Values used for experiments generating observations at random times.} 
		\label{tbl:auc}
	\end{center}
\end{table}

\section{Discussion}
\label{sec:discussion}

In this paper we presented a new early warning test statistic, RoD, that can be used with low-frequency data and non-uniform sampling rates.  RoD adequately detects Hopf bifurcations in situations where there is one relatively short time series with sparse observations.  It is interesting to note the role noise plays in the performance.  No false positives are detected in the absence of noise.  In the presence of noise, however, the magnitude of the noise does not affect the false positive rate.  On the other hand, stronger noise improves the true positive rate \emph{before} the bifurcation occurs.  Recalling Figure \ref{fig:del_hopf}, we emphasize that in the absence of noise (or when noise has sufficiently small influence), there will be a significant delay between when the bifurcation occurs and when it is observed.  As such, we should think of the true positive rates in Figures \ref{fig:exp_tp}-\ref{fig:std_fp} as lower bounds on the effectiveness of RoD as a detection metric.  In some applications, we may even wish to detect Hopf bifurcations that have already occurred.

In situations where one would like to detect an upcoming Hopf bifurcation from a longer, high-frequency time series, we recommend producing a number of short, sparse time series by sampling the high-frequency observations at irregular intervals.  When aggregating the results of RoD on the set of shorter time series, the performance is much stronger than on a single short time series.  

Many of the difficulties in tipping point prediction for climatological and ecological applications arise from having only one time series.  It would be interesting to see how the sampling technique affects performance of other early warning signals designed to detect catastrophic bifurcations.

\bibliographystyle{plain}
\bibliography{pews_refs}

\end{document}